\newtheorem{thm}{Theorem}[section]
\theoremstyle{definition}                                 
\theoremstyle{definition}                           
\theoremstyle{remark}                             
\newcommand{\be}{\begin{eqnarray}}
\newcommand{\ee}{\end{eqnarray}}
\numberwithin{equation}{section}
\newcommand*{\affaddr}[1]{#1} 
\newcommand*{\affmark}[1][*]{\textsuperscript{#1}}
\begin{document}
\title[Free boundary problem for the role of planktonic cells in biofilms]{Free boundary problem for the role of planktonic cells \\ in biofilm formation and development}

\author{B.~D'Acunto\affmark[1]}

\author{L.~Frunzo\affmark[1]}

\author{V.~Luongo\affmark[1]}

\author{M.R.~Mattei\affmark[1]}

\author{A.~Tenore\affmark[1]}

\maketitle

\vspace{-0.5cm}

{\footnotesize
  \begin{center}
\affaddr{\affmark[1]University of Naples "Federico II", Department of Mathematics and Applications "Renato Caccioppoli", \\
via Cintia, Monte S. Angelo I-80126 Napoli, Italy}\\
   \end{center}}

	\begin{center}
	\footnotesize{Corresponding author: M.R.~Mattei, \texttt{mariarosaria.mattei@unina.it}}\\
   \end{center}
%
%
%

\begin{abstract}

The dynamics of biofilm lifecycle are deeply influenced by the surrounding environment and the interactions between sessile and planktonic phenotypes. Bacterial biofilms typically develop in three distinct stages: attachment of cells to a surface, growth of cells into colonies, and detachment of cells from the colony into the surrounding medium. The attachment of planktonic cells from the surrounding environment plays a prominent role in the initial phase of biofilm lifecycle as it initiates the colony formation. During the maturation stage, biofilms harbor numerous microenvironments which lead to metabolic heterogeneity. Such microniches provide conditions suitable for the growth of new species, which are present in the bulk liquid as planktonic cells and can penetrate the porous biofilm matrix. We present a 1D continuum model on the interaction of sessile and planktonic phenotypes in biofilm lifestyle. Such a model is able to reproduce the key role of planktonic cells in the formation and development of biofilms by considering the initial attachment and colonization phenomena. The model is formulated as a hyperbolic-elliptic free boundary value problem with vanishing initial value which considers the concentrations of planktonic and sessile cells as state variables. Hyperbolic equations reproduce the transport and growth of sessile species, while elliptic equations model the diffusion and conversion of planktonic cells and dissolved substrates. The attachment is modelled as a continuous, deterministic process which depends on the concentrations of the attaching species. The growth of new species is modelled through a reaction term in the hyperbolic equations which depends on the concentration of planktonic species within the biofilm. Existence and uniqueness of solutions are discussed and proved for the attachment regime. Finally, some numerical examples show that the proposed model correctly reproduces the growth of new species within the biofilm and overcomes the ecological restrictions characterizing the Wanner-Gujer type models.
	
\end{abstract}

\maketitle

\section{Introduction} \label{n1}

In recent years, the study of how the sessile and planktonic phenotypes interact in biofilm lifestyle has become a theme of intense interest and scrutiny \cite{biofilm}. Biofilms are microbial assemblies which commonly develop attached to abiotic or biotic surfaces. They are characterized by a solid matrix of extracellular polymeric substance (EPS) in which microorganisms are embedded \cite{flemming2010}. The biofilm dynamics are deeply influenced by microbial mass exchanges between biofilm and the surrounding environment, which involve both the sessile and planktonic biomasses. The biofilm formation is initiated by pioneer microbial cells in planktonic form, which attach to a solid support through an initial attachment process. Such cells switch their mode of growth from planktonic to sessile and constitute the first sessile microbial colony \cite{palmer2007bacterial}, which develops and expands over time as a result of the microbial metabolic growth. Meanwhile, large EPS production by sessile cells confers high density and compactness to the aggregate and protects it from external agents. During the maturation stage, the high density induces large spatial gradients in biofilm properties, leading to numerous microenvironments and extremely heterogeneous microbial distributions. Specifically, new biological conditions arising within the biofilm can promote the phenomenon of microbial invasion: motile planktonic cells colonize the aggregate by penetrating the biofilm matrix, and proliferate as new sessile biomass where ideal conditions for their metabolic activity occur \cite{sutherland2001biofilm}. This means that the number of microbial species constituting the biofilm can increase over time, since microbial species initially not present can join the biofilm when new metabolic microniches arise. Furthermore, external shear forces, nutrients depletion and biomass decay lead to the detachment of cells from the biofilm colony into the surrounding medium \cite{trulear1982dynamics}. Lastly, in the final stage of the biofilm lifecycle, microbial dispersal phenomena can occur: as a result of habitat decay (resource depletion and cell competition for space), planktonic cells, known as dispersed cells, are released in the surrounding environment, migrate to new surfaces and subsequently constitute new biofilm aggregates \cite{mcdougald2012should}. 

Despite the high amount of mathematical works on multispecies biofilms growth developed in the framework of the Wanner and Gujer model \cite{wanner1986multispecies} or as multidimensional partial differential equation models \cite{kla,cogan,ward,eberl,clarelli}, most of them completely neglect the attachment process in the initial phase of biofilm formation, since the initial data that prescribe location, size, and composition of colonies at the onset of the simulations are arbitrarily assigned. This strongly affects the biofilm development and maturation as highlighted by a recent work \cite{eberl2} where the attachment has been incorporated as a discrete stochastic process in a density-dependent diffusion-reaction model for cellulolytic biofilms. Furthermore, the Wanner-Gujer type models \cite{wanner1986multispecies,mavsic2014modeling,eberl3,d2019free} can lead, in some cases, to ecological restrictions on the number of species constituting the biofilm \cite{klapper2011exclusion}. Indeed, they are characterized by a restriction on the number of species that can inhabit the biofilm under the detachment regime: that is if a species is not initially present within the biofilm on the support, it will be washed out from the system. The free boundary problem introduced in this work is intended to overcome these limitations by considering the initial biofilm formation mediated by planktonic cells as well as the colonization process. In particular, we present a one-dimensional continuous model considering two state variables representing the planktonic and sessile phenotypes and reproducing the transition from the former to the other in the biofilm lifecycle. The underlying model is a coupled hyperbolic-elliptic free boundary value problem with nonlocal effects. The attachment is modelled as a continuous, deterministic process which depends on the concentrations of the attaching species in the bulk liquid \cite{d2019free}. The colonization process which results in the establishment of new species in sessile form is modelled by considering an additional reaction term in the hyperbolic equations, which depends on the concentration of planktonic species within the biofilm \cite{d2015modeling}. The concentration of the planktonic species within the biofilm is governed by elliptic partial differential equations which describe their diffusion from the bulk liquid within the biofilm. A reaction term is considered to account for the conversion of the planktonic phenotype into the sessile mode of growth. 

The work is organized as follows. Section \ref{n2} introduces the mathematical background for the attachment process in the initial phase of multispecies biofilm formation, in the framework of the Wanner-Gujer approach to biofilm modelling \cite{d2019free}. The free boundary is constituted by the biofilm thickness and it is assumed to be initially zero. The growth of the attaching species is governed by nonlinear hyperbolic partial differential equations. The free boundary is governed by a first order differential equation that depends on attachment, detachment and biomass growth velocity. It is recalled that the free boundary velocity is greater than the characteristic velocity of the mentioned hyperbolic system during the first instants of biofilm formation. As a consequence, the free boundary is a space-like line. The initial-boundary conditions for the microbial concentrations are assigned on this line and they are equal to the relative abundance of the species in the biomass attached to the biofilm-bulk liquid interface. The free boundary value problem is completed by a system of semi-linear elliptic partial differential equations that governs the quasi-static diffusion of substrates. 
In Section \ref{n3}, a numerical experiment shows that the free boundary problem introduced in \cite{d2019free} needs to be generalized to eliminate any restriction on the number of species inhabiting the mature biofilm as described in \cite{klapper2011exclusion}. Section \ref{n4} introduces the new free boundary problem which accounts for both the initial phase of biofilm formation and the diffusion and colonization of planktonic species within the biofilm. Section \ref{n5} introduces the integral version of the differential free boundary problem provided in Section \ref{n4}, which is derived by adopting characteristics coordinates. An existence and uniqueness theorem of solutions is shown in Section \ref{n5} in the class of continuous functions. The proposed model is also solved numerically to simulate the biofilm evolution during biologically relevant conditions and provides interesting insights towards quantitative understanding of biofilm dynamics and ecology. Numerical results are reported in Section \ref{n6}. Finally, the conclusions of the work are outlined in Section \ref{n7}.


 \section{Background} \label{n2}

 A free boundary approach was introduced in \cite{d2019free} for modelling
 the initial phase of the multispecies biofilm formation and
 growth in the framework of Wanner and Guyer model \cite{wanner1986multispecies}.
 In this context, denoting by $X_i(z,t)$ the concentration of the
 generic bacterial species $i$,
 the one-dimensional multispecies biofilm growth is governed by
 the following system of nonlinear hyperbolic partial differential
 equations
 \begin{equation}                                        
 \frac{\partial}{\partial t}X_i(z,t)+
 \frac{\partial}{\partial z}(u(z,t)X_i(z,t))
 =\rho_ir_{M,i},\ i=1,...,n,       \label{2.1}
 \end{equation}
 where $u(z,t)$ denotes the velocity of the microbial mass, $r_{M,i}$ the specific
 growth rate, and $\rho_i$ the constant density. In addition, the
 substratum is assumed to be placed at $z=0$.

 \noindent
 The function $r_{M,i}$ depends on
 ${\bf X}=(X_1,...,X_n)$, and substrates $S_{j}$, $j=1,...,m$, as well
 \begin{equation}                                        
 r_{M,i}=r_{M,i}({\bf X}(z,t),{\bf S}(z,t)),\
 {\bf S}=(S_1,...,S_m).       \label{2.2}
 \end{equation}

\noindent
  $u(r,t)$ is governed by the following equation:
		\begin{equation}                                       	
	     \frac{\partial u}{\partial z} = \sum_{i=1}^{n}r_{M,i},\ 0< z\leq L(t),\ t>0,\ u(0,t)=0,                            \label{2.3}	
	\end{equation}
	\noindent
	where $L(t)$ represents the biofilm thickness.

 The substrate diffusion is governed by semi-linear parabolic
 partial differential equations that are usually considered in
 quasi-static conditions \cite{mavsic2014modeling}
 \begin{equation}                                        
   -D_j\frac{\partial^2 S_j}{\partial z^2}
   = r_{S,j}({\bf X}(z,t),{\bf S}(z,t)),\ j=1,...,m,            \label{2.4}
 \end{equation}
 where the functions $r_{S,j}$ denote the conversion rate of substrate
 $j$ and $D_j$ the diffusion coefficients assumed constant.

 The biofilm thickness $L(t)$ represents the free boundary of the mathematical
 problem. Its evolution is governed by the following ordinary differential
 equation \cite{wanner1986multispecies,d2019free,coclite1,coclite2},
 \begin{equation}                                        
 \dot L(t)=u(L,t)+\sigma_{a}(\mbox{\boldmath $\psi^*$})-\sigma_d(L),           \label{2.5}
 \end{equation}
 \noindent
 where $\sigma_a$ denotes the attachment velocity of biomass
 from bulk liquid to biofilm
 and $\sigma_d$ the detachment velocity of biomass from biofilm to bulk liquid.
 The function $\sigma_{a}$ depends linearly on the concentrations
 $\psi_i^*$, $i=1,...,n$, $\mbox{\boldmath $\psi$}^*=(\psi_1^*,...,\psi_n^*)$,
 of the microbial species in planktonic form present in the bulk liquid \cite{wanner1986multispecies,mavsic2014modeling,d2019free}. 
 According to the experimental evidence, the ability of colonizing a clean surface is a feature of few 
 microbial species, which are able to switch from their planktonic state, attach to the surface and start 
 to secrete a polymeric matrix anchoring the cells to each other and to the surface. 
 Even the formation of a single layer of cells can lead to a change on the electrostatic 
 nature and mechanical properties of the surface, that can facilitate the attachment 
 of new species that were initially unable to colonize the clean surface.
 According to \cite{d2019free}, this is taken into account by considering in 
 the formulation of the attachment flux $\sigma_{a}=\sum_{i=1}^n v_{a,i}\psi_i^*/\rho_i$
 different attachment velocities $v_{a,i}$ for the single microbial species living in the liquid environment.
 Such velocities can be assigned constant or can be considered as functions of the environmental 
 conditions affecting biofilm growth, that is substrate concentrations, 
 biofilm composition itself, electrostatic and mechanical properties of the surface.

 The function
 $\sigma_{d}$ is usually assumed to be proportional to $L^2$:
 $\sigma_{d}=\delta L^2$, \cite{abbas2012longtime}, where $\delta$ depends on the
 mechanical properties of the biofilm.
 In the initial phase of biofilm formation, where $L(0)=0$, the attachment is the
 prevailing process and $\sigma_{d}$ is very small, since so
 is $L^2$. Therefore, it
 is $\sigma_{a}-\sigma_{d}>0$ and the free boundary velocity is
 greater than the characteristic velocity, $\dot L(t)>u(L,t)$.
 The free boundary is a space-like line, as illustrated in
 Fig. \ref{f2.1}.
 \begin{figure}
 \fbox{\includegraphics[width=1\textwidth, keepaspectratio]{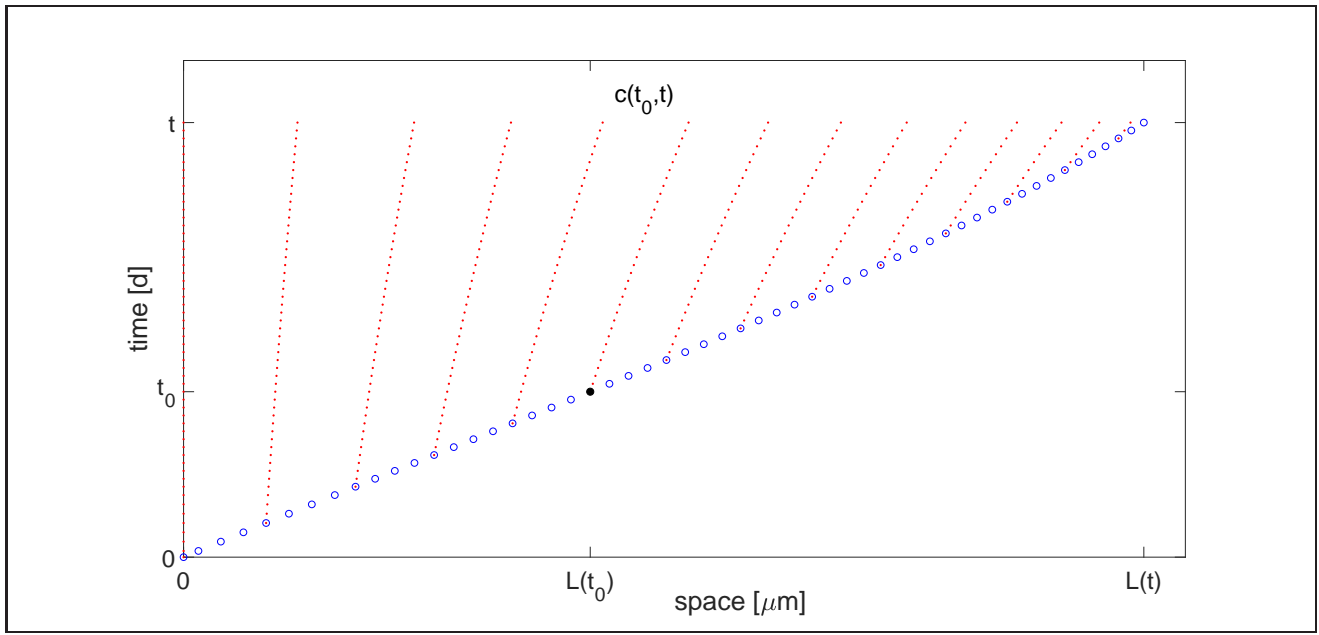}}   
 \caption{Time evolution of the free boundary with vanishing initial value and characteristic 
 lines when $\sigma_{a}-\sigma_{d}>0$. The free boundary is a space-like line. The blue dotted line denotes the free boundary evolution. 
 Red dotted lines denote the characteristic-like lines.} \label{f2.1}
 \end{figure}

 \begin{figure}
 \fbox{\includegraphics[width=1\textwidth, keepaspectratio]{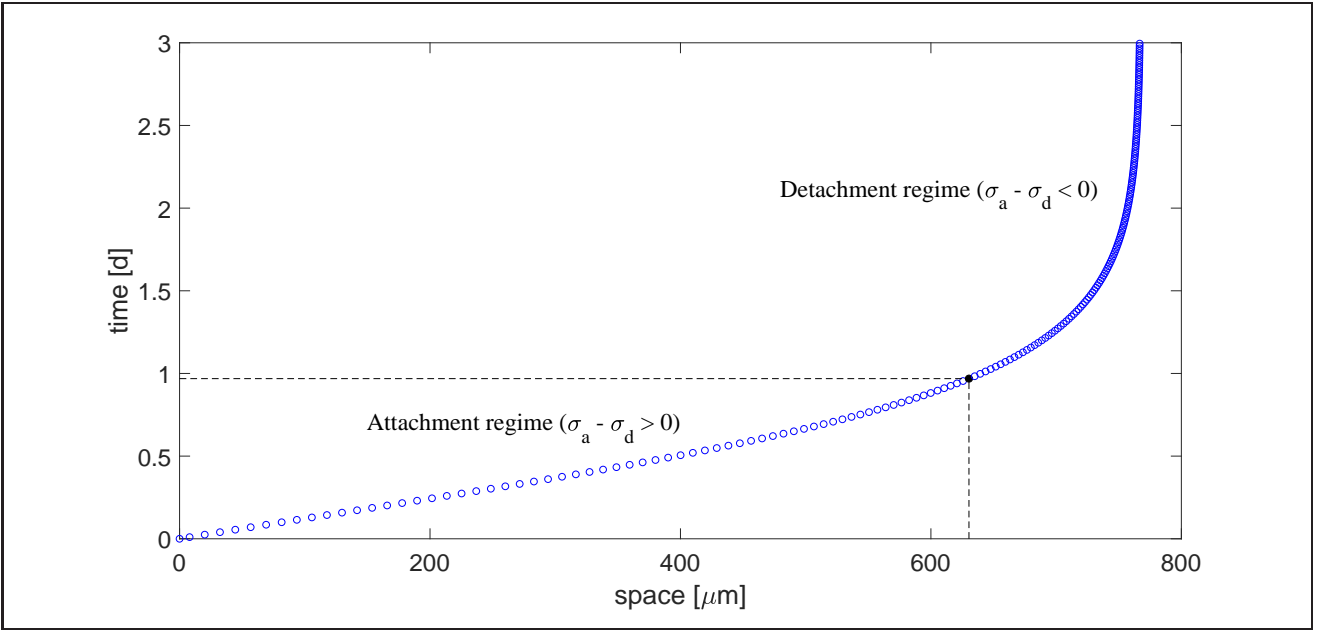}}   
 \caption{Time evolution of the free boundary with vanishing initial value. Note that the biofilm thickness undergoes both 
 attachment ($\sigma_{a}-\sigma_{d}>0$) and detachment regimes ($\sigma_{a}-\sigma_{d}<0$), 
 the latter prevailing for large $L$. The blue dotted line denotes the free boundary evolution.} \label{f2.2}
 \end{figure}
 \noindent
 In the same figure, the characteristic-like lines of system
 \eqref{2.1} are also depicted. These lines, $z=c(t_0,t)$, are defined by the
 differential initial value problem
 \begin{equation}                                        
 \frac{\partial c}{\partial t}(t_0,t)
    =u(c(t_0,t),t),\ \ c(t_0,t_0)=L(t_0).                    \label{2.6}
 \end{equation}
 For mature biofilms the free boundary $L$ becomes large, the
 detachment is the prevailing process, it is $\sigma_{a}-\sigma_{d}<0$
 and the free boundary is a time-like line, Fig. \ref{f2.2}.
 
 The free boundary value problem \eqref{2.1}-\eqref{2.4} was
 discussed in \cite{d2019free} under the following initial-boundary
 conditions:

 \begin{equation}                                        
 X_{i}(L(t),t)=X_{i,0}(t),\ i=1,...,n,                   \label{2.7}
 \end{equation}
 \begin{equation}                                       
   \frac{\partial S_j}{\partial z}(0,t)=0,\
   S_j(L,t)=S_j^*(t),\ j=1,...,m,                           \label{2.8}
 \end{equation}
 \begin{equation}                                        
  L(0)=0.                                                \label{2.9}
 \end{equation}
\noindent
 In equations \eqref{2.7}, $X_{i,0}(t)$ is the relative abundance 
 of the species $i$ in the biomass attached to the biofilm-bulk liquid interface \cite{wanner1996mathematical}.
 More precisely, $X_{i,0}(t)$ can be evaluated as 
 \begin{equation}                                      
 X_{i,0}(t)=\frac{v_{a,i}\psi_i^*(t)}{\sum_{i=1}^n v_{a,i}\psi_i^*(t)}\rho_i,\ i=1,...,n,          \label{2.10}
 \end{equation}
 \noindent
 where $\sigma_{a,i}=v_{a,i}\psi_i^*(t)$ denotes the attachment flux of the single species $i$ 
 and $\sigma_a=\sum_{i=1}^n v_{a,i}\psi_i^*(t)$ the total attachment flux.
 According to \eqref{2.10}, the concentration of the microbial species at the biofilm-liquid interface 
 $X_i(L(t),t)$ for a multispecies biofilm growing under attachment regime, 
 depends on both the concentrations of the same species in planktonic form
 in the bulk liquid and their attachment propensity. Note that, when all the microbial 
 species in the bulk liquid are characterized by the same attachment velocity, equations \eqref{2.10} reduces to
 \[
 \frac{X_{i,0}(t)}{\rho_i}=\frac{\psi_i^*(t)}{\sum_{i=1}^n\psi_i^*(t)},
 \]
 that is the volume 
 fraction of the microbial species $i$ at the biofilm-bulk liquid interface
 assumes the same value of the volume fraction within the bulk liquid. This reproduces 
 the case of a biofilm that will be initially constituted by all microbial 
 species inhabiting the surrounding liquid environment. However, 
 going on with time the biofilm composition is affected by other factors 
 such as substrate availability, specific microbial growth rate, detachment flux.

 For what concerns substrate diffusion, the first boundary condition \eqref{2.8} is the no flux condition at
 substratum. The functions $S_j^*(t)$ in the second boundary
 condition \eqref{2.8} are prescribed functions in general.


 \section{Criticism} \label{n3}

As outlined in \cite{d2019free}, the model for
 the initial biofilm formation, summarized in the previous section,
 should be generalized to include the possibility that new attaching
 bacterial species can move downward within the biofilm matrix and colonize the
 regions where the conditions for their growth are optimal. An example, referred to as \textit{Case 1}, could help to better understand the question. 
 To discuss this special problem, an
 equivalent expression will be used for equations \eqref{2.1},
 where $X_{i}$ is replaced by the volume fraction $f_i$ defined by
 \begin{equation}                                      
 f_i(z,t)=X_{i}(z,t)/\rho_i,\ i=1,...,n,               \label{3.1}    
 \end{equation}              
 subjected to the constraint
 \begin{equation}                                         
  \sum_{i=1}^{n}f_i(z,t)=1.                    \label{3.2}
 \end{equation}
 Considering \eqref{3.1} in \eqref{2.1} yields
 \begin{equation}                                        
 \frac{\partial}{\partial t}f_i(z,t)+
 \frac{\partial}{\partial z}(u(z,t)f_i(z,t))
 =r_{M,i},\ i=1,...,n.                          \label{3.3}
 \end{equation}
 For equations above, conditions \eqref{2.7} are replaced by
 \begin{equation}                                        
 f_{i}(L(t),t)=f_{i,0}(t),\ i=1,...,n,                       \label{3.4}
 \end{equation}
 where
 \begin{equation}                                        
 f_{i,0}(t)=X_{i,0}(t)/\rho_i.                             \label{3.5}
 \end{equation}

 Let us consider a three species and substrate biofilm $n=3, \ m=3$ growing under time-dependent
 conditions. In particular, the model simulates the case of a biofilm growing in 
 a liquid environment initially inhabited by species $\psi_1^*$ and $\psi_2^*$ and 
 continuously fed with substrates $S_1$ and $S_2$. At time $t=t_1>0$, a third species $\psi_3^*$ is supposed to
 be fed into the system

 \begin{equation}                                        
 \psi_i^*(t)=\psi_{i,0}^*> 0,\ 0\leq t\leq T,\ i=1,2,                  \label{3.6}
 \end{equation}
 \begin{equation}                                        
 \psi_3^*(t)=\left\{
   \begin{array}{ll}
    0, & 0\leq t\leq t_1, \\
    \psi_{3,0}^* \frac{(t-t_1)^{10}}{t_1^{10/t_1}+(t-t_1)^{10}}>0 & t_1< t\leq T. \\
  \end{array}
 \right.                                          \label{3.7}
 \end{equation}
\noindent
 Species $\psi_1^*$ and $\psi_2^*$ start to attach
 at $t=0$ while the third at $t=t_1>0$

 \begin{equation}                                        
 f_{i,0}(t)>0,\  i=1,2,\ f_{3,0}(t)=0,\ 0\leq t< t_1,                     \label{3.8}
 \end{equation}
 \begin{equation}                                        
 f_{i,0}(t)>0,\  i=1,2,3,\ t \geq t_1.                                    \label{3.9}
 \end{equation}
 \noindent
 Functions $f_{i,0}(t)$ can be derived from equations \eqref{2.10}, \eqref{3.6} and \eqref{3.7}.
 Species $f_1$ and $f_2$ grow on substrate $S_1$ and $S_2$, respectively. Species $f_1$ by consuming substrate $S_1$ produces $S_3$, 
 which is uptaken by $f_3$. All species are supposed to grow only in sessile form, and the reactor is considered as an 
 infinite reserve of
 substrates and planktonic species ($S_j^*(t)=S_{j,0}^*$). 
 The reaction terms $r_{M,i}$ and $r_{S,j}$ in equations \eqref{3.3} and \eqref{2.4} are modelled by using
 Monod type kinetics and are expressed as
  \begin{equation}                                        
  r_{M,1} = \mu_{\max,1} \frac{S_1}{K_1+S_1}  f_1,\
  r_{M,2} = \mu_{\max,2} \frac{S_2}{K_2+S_2}  f_2,\
  r_{M,3} = \mu_{\max,3} \frac{S_3}{K_3+S_3}  f_3,                \label{3.10}
 \end{equation}

 \begin{equation}                                       
 r_{S,1} = -\frac{r_{M,1}}{Y_1} \rho_1,\
 r_{S,2} = -\frac{r_{M,2}}{Y_2} \rho_2,\
 r_{S,3} = \frac{r_{M,1}}{Y_1} \rho_1-\frac{r_{M,3}}{Y_3} \rho_3.             \label{3.11}
 \end{equation}
 \noindent
 The values of the kinetic parameters and boundary conditions used in the numerical simulations are reported in Table \ref{t3.1}.
 All the sessile species are supposed to have the same density $\rho_i=\rho, i=1,...,n$. The simulation time adopted for the numerical 
 experiment is $t=10 \ d$. We are aware that such simulation time will cover both the initial biofilm formation and the maturation phase where 
 the detachment will be prevalent on the attachment flux. This choice is justified by the fact that we were interested in showing 
 also the mature biofilm configuration, which is achieved under detachment regime.

 \begin{table}[ht]
 \begin{footnotesize}
 \begin{center}
 \begin{tabular}{llccc}
 \hline
 {\textbf{Parameter}} & {\textbf{Definition}} & {\textbf{Unit}} & {\textbf{Value}}
 \\
 \hline
 $\mu_{max,1}$   & Maximum specific growth rate for $f_1$           &  $d^{-1}$                              & $0.4$           \\
 $\mu_{max,2}$   & Maximum specific growth rate for $f_2$           &  $d^{-1}$                              & $1.5$           \\
 $\mu_{max,3}$   & Maximum specific growth rate for $f_3$           &  $d^{-1}$                              & $0.5$           \\
 $K_{1}$         & Half saturation constant for $f_1$ on $S_1$      &  $g \ m^{-3}$                          & $1$             \\
 $K_{2}$         & Half saturation constant for $f_2$ on $S_2$      &  $g \ m^{-3}$                          & $20$            \\
 $K_{3}$         & Half saturation constant for $f_3$ on $S_3$      &  $g \ m^{-3}$                          & $1$             \\
 $Y_{1}$         & Yield of $f_1$ on $S_1$                          &  $--$                                  & $0.4$           \\
 $Y_{2}$         & Yield of $f_2$ on $S_2$                          &  $--$                                  & $0.9$           \\
 $Y_{3}$         & Yield of $f_1$ on $S_1$                          &  $--$                                  & $0.9$           \\
 $D_{1}$         & Diffusion coefficient of $S_1$ in biofilm        &  $m^2 \ d^{-1}$                        & $10^{-5}$       \\
 $D_{2}$         & Diffusion coefficient of $S_2$ in biofilm        &  $m^2 \ d^{-1}$                        & $10^{-5}$       \\
 $D_{3}$         & Diffusion coefficient of $S_3$ in biofilm        &  $m^2 \ d^{-1}$                        & $10^{-5}$       \\
 $\rho$          & Biofilm density                                  &  $g \ m^{-3}$                          & $5000$            \\
 $\delta$        & Biomass shear constant                           &  $m^{-1} \ d^{-1}$                     & $2000$              \\
 $S_{1,0}^*$       & $S_1$ concentration in the bulk liquid         &  $g \ m^{-3}$                          & $100$           \\
 $S_{2,0}^*$       & $S_2$ concentration in the bulk liquid         &  $g \ m^{-3}$                          & $100$           \\
 $S_{3,0}^*$       & $S_3$ concentration in the bulk liquid         &  $g \ m^{-3}$                          & $0$             \\
 $\psi_{1,0}^*$    & $\psi_1^*$ concentration in the bulk liquid    &  $g \ m^{-3}$                          & $100$           \\
 $\psi_{2,0}^*$    & $\psi_2^*$ concentration in the bulk liquid    &  $g \ m^{-3}$                          & $100$           \\
 $\psi_{3,0}^*$    & $\psi_3^*$ concentration in the bulk liquid    &  $g \ m^{-3}$                          & $100$             \\
 $v_{a,1}$         & $\psi_1^*$ attachment velocity                 &  $m \ d^{-1}$                          & $2.5\cdot10^{-2}$             \\
 $v_{a,2}$         & $\psi_2^*$ attachment velocity                 &  $m \ d^{-1}$                          & $2.5\cdot10^{-2}$             \\
 $v_{a,3}$         & $\psi_3^*$ attachment velocity                 &  $m \ d^{-1}$                          & $2.5\cdot10^{-2}$             \\ 
 \hline
 \end{tabular}\\
 \caption{Kinetic parameters used for model simulations} \label{t3.1}
 \end{center}
 \end{footnotesize}
 \end{table}

 Fig. \ref{f3.1} shows the free boundary evolution and the characteristic line $c(t_1,t)$ starting from $(L(t_1),t_1)$ 
 up to $0.8d$ simulation time. Figs. \ref{f3.2} and \ref{f3.3} show the biofilm composition and substrate trends within the biofilm
 over time, under attachment and detachment regimes respectively.

 \begin{figure}
 \fbox{\includegraphics[width=0.60\textwidth, keepaspectratio]{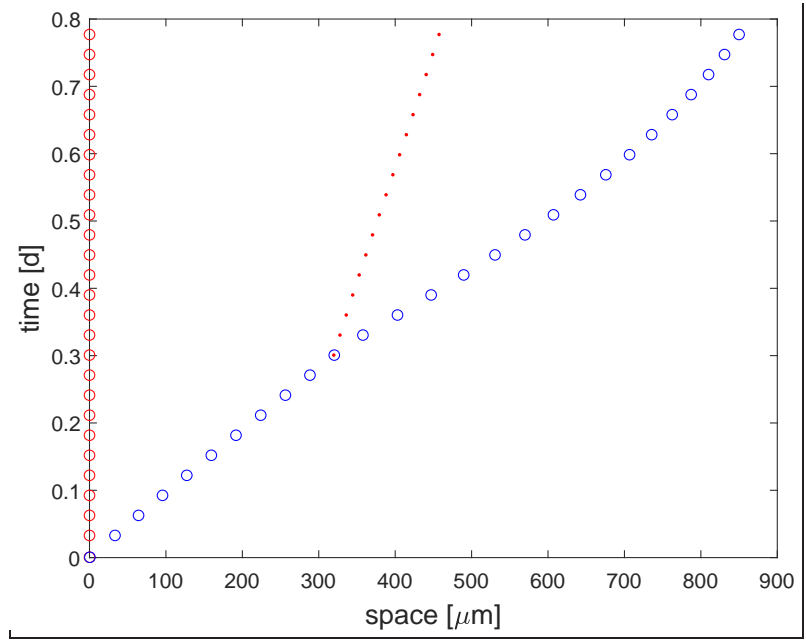}}   
 \caption{Time evolution of the free boundary (blue open dots) and the characteristic line $c(t_1,t)$ (red solid dots) under attachment regime ($\sigma_{a}-\sigma_{d}>0$). 
 Red open dots denote the characteristic line $c(0,t)$.}\label{f3.1}
 \end{figure}  

 \begin{figure} 
 \fbox{\includegraphics[width=0.7\textwidth, keepaspectratio]{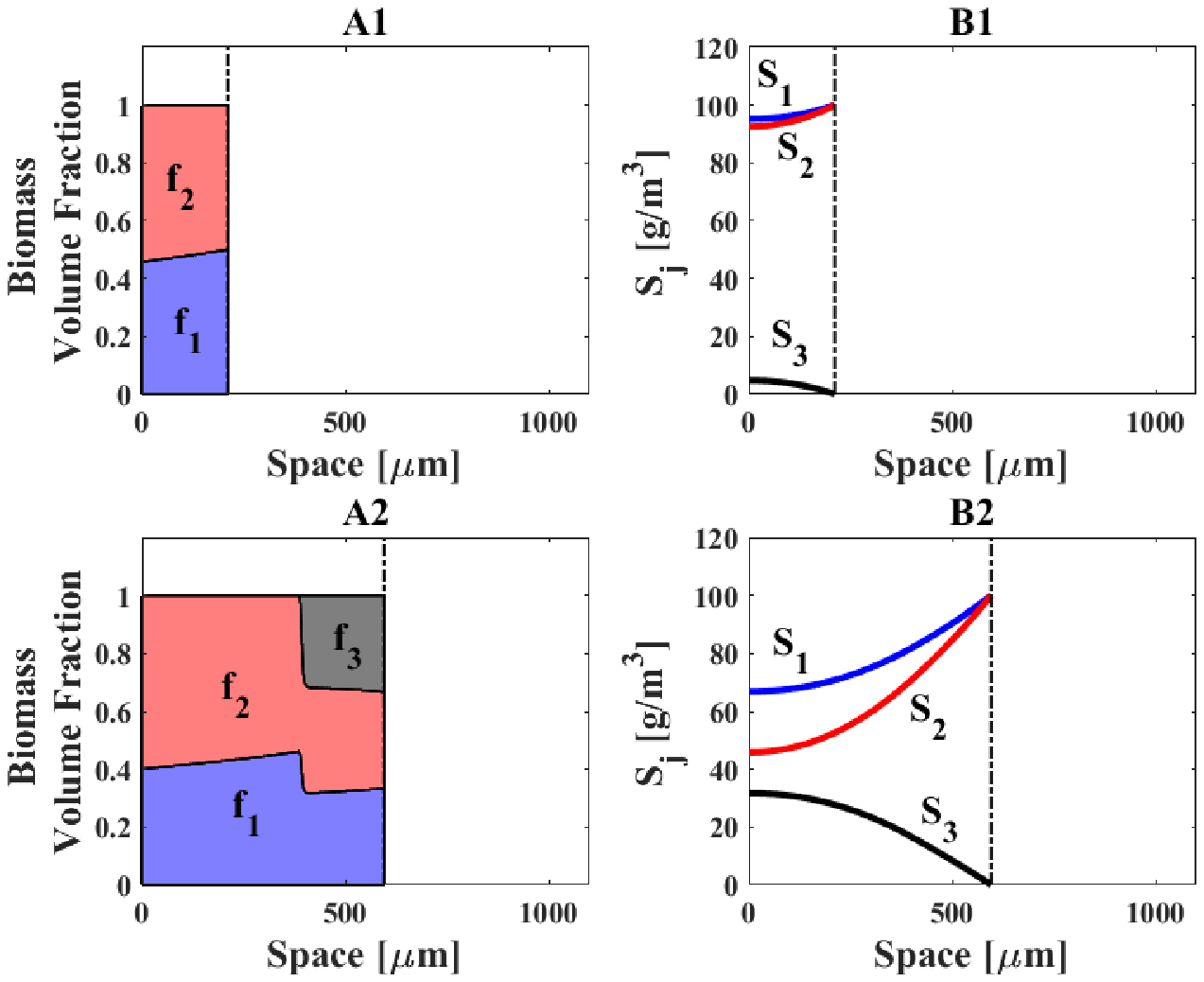}}   
 \caption{Biofilm composition (A1-A2) and substrate distribution (B1-B2) for \textit{Case 1}, under attachment regime, at time
 $t=0.25 \ d$ (top) and $t=0.50 \ d$ (bottom).} \label{f3.2}
 \end{figure}   

 \begin{figure}
 \fbox{\includegraphics[width=0.7\textwidth, keepaspectratio]{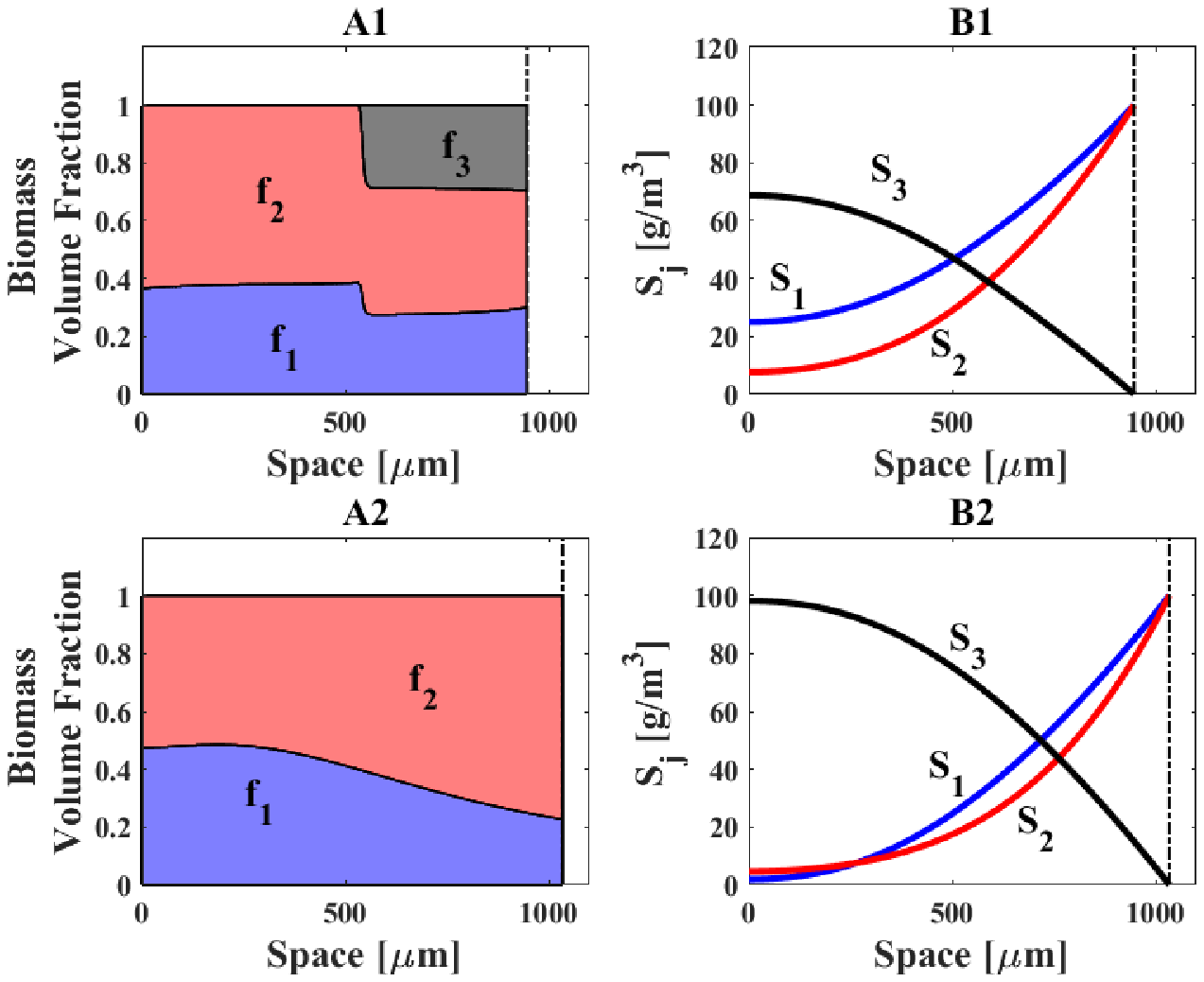}}   
 \caption{Biofilm composition (A1-A2) and substrate distribution (B1-B2) for \textit{Case 1}, under detachment regime, at time
 $t=1 \ d$ (top) and $t=10 \ d$ (bottom).}     \label{f3.3}
 \end{figure}

 \noindent
 The third species begins to adhere to the biofilm-bulk liquid interface at $t=t_1$. 
 Substrates $S_1$ and $S_2$ are consumed within biofilm by species $f_1$ and $f_2$. As a consequence, favorable conditions for $f_3$ growth occurs 
 within the inner biofilm region due to $S_3$ production. According to the uniqueness
 and existence theorem provided in \cite{d2019free}, the third species is confined
 within the region $z>c(t_1,t)$ and does not colonize the region $z< c(t_1,t)$
 where there are favorable conditions for its growth
  \begin{equation}                                        
  f_3(z,t)\left\{
   \begin{array}{lll}
    =0, & 0\leq z< c(t_1,t), & 0\leq t< t_1, \\
    >0, & z\geq c(t_1,t), & t\geq t_1. \\
  \end{array}
 \right.                                             \label{3.12}
 \end{equation}
 \noindent
 As shown in Figs. \ref{f3.2} and \ref{f3.3}, the third species is unable to
 penetrate the inner biofilm region. 
 Under detachment regime, its concentration tends to zero and it is completely 
 washed out from the biofilm system. This is a well-known
 behavior of the model \cite{wanner1986multispecies}, as stated in \cite{klapper2011exclusion}.
 \noindent
 The model introduced in this work is intended to eliminate this great limitation.


 \section{Statement of the free boundary problem} \label{n4}

This section presents the free boundary value problem for
 biofilm growth which considers its initial formation ($\sigma_a-\sigma_d>0, L(0)=0$) 
 and the diffusion and colonization of the planktonic species
 within the biofilm. It is a generalization of the problem discussed
 in \cite{mavsic2014modeling,d2019free} and it is obtained by developing some ideas introduced in \cite{d2015modeling}. 
 Specifically, an additional state variable is considered, $\Psi_i$, ${\bf \Psi}=(\Psi_1,..., \Psi_n)$, 
 which represents the concentration of the planktonic species within the biofilm. The differential mass balance equations \eqref{2.1} are modified
 by adding a growth rate term $r_i$ that takes into account for
 the colonizing bacterial species, and further equations are introduced
 for the diffusion of the planktonic species. The resulting model is able
 to overcome the criticism outlined in Sec. \ref{n3}, as shown through the simple examples reported in Sec. \ref{n6}.

 The biofilm growth is governed by the following equations 
  \begin{equation}                                        \label{4.1}
 \frac{\partial X_i}{\partial t}+
 \frac{\partial}{\partial z}(uX_i)=\rho_ir_{M,i}({\bf X},{\bf S})
 +\rho_ir_{i}({\bf \Psi},{\bf S}),\ 0\leq z\leq L(t),
 \ t>0,i=1,...,n,
 \end{equation}
 \begin{equation}                                        \label{4.2}
 X_{i}(L(t),t)=X_{i,0}(t),\ t>0,\ i=1,...,n,
 \end{equation}
 \begin{equation}                                        \label{4.3}
 \dot L(t)=u(L(t),t)+\sigma_{a}(\mbox{\boldmath $\psi^*$}),
 \ t>0,\ L(0)=0,
 \end{equation}
 \begin{equation}                                        \label{4.4}
   \frac{\partial u}{\partial z}(z,t)=G({\bf X}(z,t),{\bf S}(z,t),{\bf \Psi}(z,t)),
   \ 0< z\leq L(t),\ u(0,t)=0,
 \end{equation}
 where
 \begin{equation}                                        \label{4.5}
   G({\bf X}(z,t),{\bf S}(z,t),{\bf \Psi}(z,t))=
   \sum_{i=1}^{n}(r_{M,i}+r_{i}),
 \end{equation}
\begin{equation}                                        \label{4.6}
   -D_j\frac{\partial^2 S_j}{\partial z^2}
   = r_{S,j}({\bf X}(z,t),{\bf S}(z,t)),\ 0< z< L(t),
 \ t>0,\ j=1,...,m,
 \end{equation}
 \begin{equation}                                        \label{4.7}
   \frac{\partial S_j}{\partial z}(0,t)=0,\
   S_j(L,t))=S_j^*(t),\ t>0,\ j=1,...,m.
 \end{equation}
\noindent
 The diffusion of the colonizing species within the biofilm is
 governed by semi-linear parabolic
 partial differential equations that are considered in
 quasi-static conditions
 \begin{equation}                                        \label{4.8}
   -D_{\Psi,i}\frac{\partial^2 \Psi_i}{\partial z^2}
   = r_{\Psi,i}({\bf \Psi}(z,t),{\bf S}(z,t)),\ 0< z< L(t),
 \ t>0,\ i=1,...,n,
 \end{equation}
 \noindent
 where $r_{\Psi,i}$ indicates the conversion rate due to the switch from planktonic to 
 sessile mode of growth and $D_{\Psi,i}$ is the diffusivity coefficient of the planktonic species within the biofilm. 
 Diffusion equations for ${\bf \Psi}$ are considered in quasi-static conditions for the same reason as ${\bf S}$.
 Equations \eqref{4.8} are integrated with the following Neumann-Dirichlet
 boundary conditions
 \begin{equation}                                        \label{4.9}
   \frac{\partial \Psi_i}{\partial z}(0,t)=0,\
   \Psi_i(L,t)=\psi_i^*(t),\ t>0,\ i=1,...,n,
 \end{equation}
 where the no flux boundary conditions on the support are evident
 and the Dirichlet boundary conditions state that the values of the planktonic species
 on the free boundary are the same as in the bulk liquid.

 Note that equation \eqref{4.3} refers to the initial phase of biofilm formation, 
 when the detachment flux $\sigma_d$ is negligible compared to $\sigma_a$. The free boundary $L(t)$
 is a space-like line and equation \eqref{4.2} provides the initial conditions for the microbial species
 in sessile form on the free boundary. Conversely, during the maturation stage of biofilm growth 
 the detachment flux is predominant and the free boundary is represented by a time-like line as stated in Sec. \ref{n2}.
 The free boundary value problem referring to the mature phase of biofilm growth and considering the 
 interaction between the planktonic and sessile phenotype through the colonization process
 has been investigated both qualitatively and numerically in \cite{d2018moving,d2018invasion}.


 \section{Uniqueness and existence of solutions} \label{n5}

According to \cite{d2019free}, the differential free boundary problem
 \eqref{4.1}-\eqref{4.9} can be converted to an equivalent system of integral equations by using the
 characteristics introduced in \eqref{2.6}. The integral problem is summarized below by
 using the following positions
  \begin{equation}                                        \label{5.1}
     {\bf x}(t_0,t)={\bf X}(c(t_0,t),t),\ \ {\bf x}(x_1,...,x_n),
 \end{equation}
 \begin{equation}                                        \label{5.2}
     {\bf s}(t_0,t)={\bf S}(c(t_0,t),t),\ \ {\bf s}(s_1,...,s_m),
 \end{equation}
 \begin{equation}                                        \label{5.3}
     \mbox{\boldmath $\psi$}(t_0,t)=\mbox{\boldmath $\Psi$}(c(t_0,t),t),\ \ \mbox{\boldmath $\psi$}(\psi_1,...,\psi_n),
 \end{equation}
 The integral equations for $x_i$ follow from \eqref{2.6},\eqref{4.1}-\eqref{4.5}
 \begin{equation}                                        \label{5.4}
  x_i(t_0,t)=X_{i,0}(t_0)+\int_{t_0}^t
  F_i({\bf x}(t_0,\tau),{\bf s}(t_0,\tau),\mbox{\boldmath $\psi$}(t_0,\tau))d\tau,
  \ 0\leq t_0< t\leq T, \ i=1,...,n.
 \end{equation}
 The integral equations for $s_j$ follow from \eqref{4.6}-\eqref{4.7}
 \[
    s_j(t_0,t)=
   \int_{t_0}^{t}   d\theta\int_{0}^{\theta}
   F_{s,j}({\bf x}(\tau,t),{\bf s}(\tau,t),
   \frac{\partial c}{\partial \theta}(\theta,t),
   \frac{\partial c}{\partial \tau}(\tau,t))
   d\tau
 \]
 \begin{equation}                                        \label{5.5}
   +  S_j^*(t),\ \ 0< t_0< t\leq T, \ j=1,...,m,
 \end{equation}
\noindent
 where $F_{s,j}$ is defined in \eqref{5.13} at the end of this
 section.

 \noindent
 Similarly to ${\bf s}$, the integral equations for $\psi_i$ follow from \eqref{4.8}-\eqref{4.9} and write
  \[
    \psi_i(t_0,t)=
   \int_{t_0}^{t}   d\theta\int_{0}^{\theta}
   F_{\psi,i}(\mbox{\boldmath $\psi$}(\tau,t),{\bf s}(\tau,t),
   \frac{\partial c}{\partial \theta}(\theta,t),
   \frac{\partial c}{\partial \tau}(\tau,t))
   d\tau
 \]
 \begin{equation}                                        \label{5.6}
   +  \psi_i^*(t),\ \ 0< t_0< t\leq T, \ i=1,...,n,
 \end{equation}
\noindent
 where $F_{\psi,i}$ is defined in \eqref{5.14}.
\noindent
 The integral equation for $L$ follows from \eqref{2.6},\eqref{4.3},\eqref{4.4}
 \begin{equation}                                        \label{5.7}
   L(t_0)=\Sigma(t_0)+\int_{0}^{t_0}\ d\theta \int_{0}^{\theta}
 F_{L}({\bf x}(\tau,\theta),{\bf s}(\tau,\theta),\mbox{\boldmath $\psi$}(\tau,\theta),
   \frac{\partial c}{\partial \tau}(\tau,\theta))
  d\tau, \ 0<t_0\leq T,
 \end{equation}
 with $\Sigma(t_0)$ and $F_{L}$ defined in \eqref{5.15}-\eqref{5.16}.
\noindent
 The integral equations for $c(t_0,t)$ and $\partial c/\partial t_0$
 can be obtained from \eqref{2.6},\eqref{4.3}-\eqref{4.5} rewritten in terms of characteristic coordinates
 \[
 c(t_0,t)=\Sigma(t_0)
 + \int_{0}^{t_0}d\theta
 \int_{0}^{\theta}
 F_{c,1}({\bf x}(\tau,\theta),{\bf s}(\tau,\theta),\mbox{\boldmath $\psi$}(\tau,\theta),
  \frac{\partial c}{\partial \tau}(\tau,\theta)) d\tau
 \]
 \begin{equation}                                        \label{5.8}
 +\int_{t_0}^{t}d\theta\int_{0}^{t_0}
 F_{c,1}({\bf x}(\tau,\theta),{\bf s}(\tau,\theta),\mbox{\boldmath $\psi$}(\tau,\theta),
   \frac{\partial c}{\partial \tau}(\tau,\theta))
 d\tau,\ \ 0< t_0< t\leq T,
 \end{equation}

 \[
 \frac{\partial c}{\partial t_0}(t_0,t)
   =\int_{t_0}^{t}
   F_{c,2}({\bf x}(t_0,\theta),{\bf s}(t_0,\theta),\mbox{\boldmath $\psi$}(t_0,\theta),
   \frac{\partial c}{\partial t_0}(t_0,\theta))
   d\theta
 \]
 \begin{equation}                                        \label{5.9}
 +\sigma_{a}(\mbox{\boldmath $\psi^*$}(t_0)),
   \ \ 0< t_0< t\leq T,
 \end{equation}
 where

 \begin{equation}                                        \label{5.10}
 F_{c,1}({\bf x}(\tau,\theta),{\bf s}(\tau,\theta),\mbox{\boldmath $\psi$}(\tau,\theta),
   \frac{\partial c}{\partial \tau}(\tau,\theta))
=G({\bf x}(\tau,\theta),{\bf s}(\tau,\theta),\mbox{\boldmath $\psi$}(\tau,\theta))
 \frac{\partial c}{\partial \tau}(\tau,\theta),
 \end{equation}

 \begin{equation}                                        \label{5.11}
 F_{c,2}({\bf x}(t_0,\theta),{\bf s}(t_0,\theta),\mbox{\boldmath $\psi$}(t_0,\theta),
   \frac{\partial c}{\partial t_0}(t_0,\theta))=G({\bf x}(t_0,\theta),{\bf s}(t_0,\theta),\mbox{\boldmath $\psi$}(t_0,\theta))
   \frac{\partial c}{\partial t_0}(t_0,\theta).
 \end{equation}
\noindent
 The functions introduced in equations \eqref{5.4}-\eqref{5.7} are defined below
 \begin{equation}                                        \label{5.12}
  F_i=\rho_i(r_{M,i}+r_i)-X_iG,\ \ i=1,...,n,
 \end{equation}

 \begin{equation}                                       
     F_{s,j}({\bf x}(\tau,t),{\bf s}(\tau,t),
   \frac{\partial c}{\partial \theta}(\theta,t),
   \frac{\partial c}{\partial \tau}(\tau,t)) =D_j^{-1}
   r_{S,j}({\bf x}(\tau,t),{\bf s}(\tau,t))
   \frac{\partial c}{\partial \theta}(\theta,t)
   \frac{\partial c}{\partial \tau}(\tau,t),          \label{5.13}
 \end{equation}

 \begin{equation}                                        
    F_{\psi,i}(\mbox{\boldmath $\psi$}(\tau,t),{\bf s}(\tau,t),
   \frac{\partial c}{\partial \theta}(\theta,t),
   \frac{\partial c}{\partial \tau}(\tau,t)) = D_{\psi,i}^{-1} \
   r_{\psi,i}(\mbox{\boldmath $\psi$}(\tau,t),{\bf s}(\tau,t))
   \frac{\partial c}{\partial \theta}(\theta,t)
   \frac{\partial c}{\partial \tau}(\tau,t),     \label{5.14}
 \end{equation}

 \begin{equation}                                        \label{5.15}
   \Sigma(t_0)=\int_{0}^{t_0}\sigma_{a}(\mbox{\boldmath $\psi^*$}(\theta))
 d\theta,
 \end{equation}

 \begin{equation}                                        \label{5.16}
 F_{L}({\bf x}(\tau,\theta),{\bf s}(\tau,\theta),\mbox{\boldmath $\psi$}(\tau,\theta)
   \frac{\partial c}{\partial \tau}(\tau,\theta))=G({\bf x}(\tau,\theta),{\bf s}(\tau,\theta),\mbox{\boldmath $\psi$}(\tau,\theta))
 \frac{\partial c}{\partial \tau}(\tau,\theta).
 \end{equation}
\noindent
An existence and uniqueness theorem for the integral problem \eqref{5.4}-\eqref{5.9}
can be proved in the space of the continuous functions as generalization of the results in \cite{d2019free}.

 \begin{thm}\label{thm}
  Suppose that:

 (a) $x_{i}^{}(t_0^{},t),s_{j}^{}(t_0^{},t),\psi_{i}^{}(t_0^{},t),c(t_0^{},t),c_{t_0}^{}(t_0^{},t)
 \in C^0([0,\ T_1]\times[0,\  T_1])$,
 \ $T_1>0$, \ $i=1,...,n$, \ $j=1,...,m$,\ and
 $L(t_0^{})\in C^0([0,\ T_1])$;

 (b) $X_{i,0}(t_0^{}), \sigma_{a}(\mbox{\boldmath $\psi^*$}(t_0)), S_j^*(t), \Psi_i^*(t)
 \in C^0([0,\ T_1]),
 \ i=1,...,n, \ j=1,...,m$;

 (c) $|x_{i}^{}-X_{i,0}|\leq h_{x,i}, \ i=1,...,n$;
 $|s_{j}^{}-S_j^*|\leq h_{s,j}, \ j=1,...,m$;
 $|\psi_{i}^{}-\psi_i^*|\leq h_{\psi,i}, \ i=1,...,n$;
 $|L-\Sigma|\leq h_{L}$; $|c-\Sigma|\leq h_{c,1}$;
 $|c_{t_0}^{}-\sigma_a|\leq h_{c,2}$,
 where $h_{x,i},h_{s,j},h_{\psi,i},h_{L},h_{c,1},h_{c,2}$ are positive constants;

 (d) $F_i, i=1,...,n$, $F_{s,j}, j=1,...,m$, $F_{\psi,i}, i=1,...,n$, 
 $F_{L},F_{c,1},F_{c,2}$ are bounded and Lipschitz continuous with respect to their arguments
 \[
 M_i=\max |F_i|,\ i=1,...,n,\ M_{s,j}=\max |F_{s,j}|,\ j=1,...,m,
 \]
 \[
  M_{\psi,i}=\max |F_{\psi,i}|,\ i=1,...,n,\ M_{L}=\max |F_{L}|,\ M_{c,1}=\max |F_{c,1}|,
  \ M_{c,2}=\max |F_{c,2}|,
 \]
 \[
 |F_i({\bf x},{\bf s},\mbox{\boldmath $\psi$})-F_i(\tilde{\bf x},\tilde{\bf s},\tilde{\mbox{\boldmath $\psi$}})|\leq
 \lambda_i\left[\sum_{k=1}^{n}|x_{k}^{}-\tilde x_{k}^{}|
 +\sum_{k=1}^{m}|s_{k}^{}-\tilde s_{k}^{}|+\sum_{k=1}^{n}|\psi_{k}^{}-\tilde \psi_{k}^{}|\right],
 \ i=1,...n,
 \]
 \[
 |F_{s,j}({\bf x},{\bf s},c_{t_0}^{})-
 F_{s,j}(\tilde{\bf x},\tilde{\bf s},\tilde c_{t_0}^{})|
 \leq
 \lambda_{s,j}\left[\sum_{k=1}^{n}|x_{k}^{}-\tilde x_{k}^{}|
 +\sum_{k=1}^{m}|s_{k}^{}-\tilde s_{k}^{}|+|c_{t_0}^{}-\tilde c_{t_0}^{}|
 \right],
 \ j=1,...m,
 \]
 \[
 |F_{\psi,i}(\mbox{\boldmath $\psi$},{\bf s},c_{t_0}^{})-
 F_{\psi,i}(\tilde{\mbox{\boldmath $\psi$}},\tilde{\bf s},\tilde c_{t_0}^{})|
 \leq
 \lambda_{\psi,i}\left[\sum_{k=1}^{n}|\psi_{k}^{}-\tilde \psi_{k}^{}|
 +\sum_{k=1}^{m}|s_{k}^{}-\tilde s_{k}^{}|+|c_{t_0}^{}-\tilde c_{t_0}^{}|
 \right],
 \ i=1,...,n,
 \]
 \[
 |F_{L}({\bf x},{\bf s},\mbox{\boldmath $\psi$},c_{t_0}^{})-
 F_{L}(\tilde{\bf x},\tilde{\bf s},\tilde{\mbox{\boldmath $\psi$}},\tilde c_{t_0}^{})|\leq
 \lambda_{L}\left[\sum_{k=1}^{n}|x_{k}^{}-\tilde x_{k}^{}|
 +\sum_{k=1}^{m}|s_{k}^{}-\tilde s_{k}^{}|+\sum_{k=1}^{n}|\psi_{k}^{}-\tilde \psi_{k}^{}|+|c_{t_0}^{}-\tilde c_{t_0}^{}|
 \right],
 \]
 \[
 |F_{c,1}({\bf x},{\bf s},\mbox{\boldmath $\psi$},c_{t_0}^{})-F_{c,1}(\tilde{\bf x},\tilde{\bf s},\tilde{\mbox{\boldmath $\psi$}},\tilde c_{t_0}^{})|\leq \lambda_{c,1}\left[\sum_{k=1}^{n}|x_{k}^{}-\tilde x_{k}^{}|
 +\sum_{k=1}^{m}|s_{k}^{}-\tilde s_{k}^{}|+\sum_{k=1}^{n}|\psi_{k}^{}-\tilde \psi_{k}^{}|+|c_{t_0}^{}-\tilde c_{t_0}^{}|
 \right],
 \]
 \[
 |F_{c,2}({\bf x},{\bf s},\mbox{\boldmath $\psi$},c_{t_0}^{})-F_{c,2}(\tilde{\bf x},\tilde{\bf s},\tilde{\mbox{\boldmath $\psi$}},\tilde c_{t_0}^{})|\leq
 \lambda_{c,2}\left[\sum_{k=1}^{n}|x_{k}^{}-\tilde x_{k}^{}|
 +\sum_{k=1}^{m}|s_{k}^{}-\tilde s_{k}^{}|+\sum_{k=1}^{n}|\psi_{k}^{}-\tilde \psi_{k}^{}|+|c_{t_0}^{}-\tilde c_{t_0}^{}|
 \right],
 \]
 {\it when}
 $(t_0^{},t)\in[0,\  T_1]\times[0,\  T_1]$ and the functions
 $x_{i}^{}$, $s_{j}^{}$, $\psi_{i}^{}$, $L$, $c$, $c_{t_0}^{}$ satisfy the assumptions (a)-(c).

 Then, integral system \eqref{5.4}-\eqref{5.9}
 has a unique solution $x_{i}^{}$, $s_{j}^{}$, $\psi_{i}^{}$, $L$, $c$, $c_{t_0}^{}$,
 $\in C^0([0,\ T]\times[0,\  T])$,
 where
 \[
 T=\min\left\{T_1,\frac{h_{x,1}}{M_1},...,\frac{h_{x,n}}{M_n},
 \sqrt{\frac{h_{s,1}}{M_{s,1}}},...,\sqrt{\frac{h_{s,m}}{M_{s,m}}},
 \sqrt{\frac{h_{\psi,1}}{M_{\psi,1}}},...,\sqrt{\frac{h_{\psi,n}}{M_{\psi,n}}},
 \sqrt{\frac{h_{L}}{M_{L}}}, \sqrt{\frac{h_{c,1}}{2M_{c,1}}},
 \frac{h_{c,2}}{M_{c,2}}
 \right\}.
 \]
 Moreover, $T$ satisfies the following condition,
 \begin{equation}                                        \label{5.17}
  aT^2+bT<1,
 \end{equation}
 where
 \begin{equation}                                        \label{5.18}
  a=\sum_{j=1}^{m}\lambda_{s,j}+\sum_{i=1}^{n}\lambda_{\psi,i}+\lambda_{L}+2\lambda_{c,1},\
  b=\sum_{i=1}^{n}\lambda_{i}+\lambda_{c,2}.
 \end{equation}
 \end{thm}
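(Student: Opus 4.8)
The plan is to recast the integral system \eqref{5.4}--\eqref{5.9} as a single fixed-point equation $w=\mathcal{T}w$ for the vector unknown $w=(x_1,\dots,x_n,s_1,\dots,s_m,\psi_1,\dots,\psi_n,L,c,c_{t_0})$ and to solve it by the contraction mapping principle, in the spirit of \cite{d2019free}. First I would fix $T\le T_1$ as in the statement and work in the Banach space $E$ of continuous vector functions on $[0,T]\times[0,T]$ (the component $L$ depending on $t_0$ alone), equipped with the sum of the sup-norms of the components; inside $E$ let $\mathcal{K}$ be the subset cut out by the six groups of inequalities of hypothesis (c). Since these are non-strict, $\mathcal{K}$ is closed, hence a complete metric space, and it is non-empty (it contains $w^{(0)}=(X_{i,0},S_j^*,\psi_i^*,\Sigma,\Sigma,\sigma_a)$). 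The operator $\mathcal{T}$ is defined componentwise by the right-hand sides of \eqref{5.4}--\eqref{5.9}, every occurrence of $\partial c/\partial\theta$, $\partial c/\partial\tau$, $\partial c/\partial t_0$ being read as the value of the independent component $c_{t_0}$. By (b) the non-integral terms are continuous, and by (d) the integrands -- compositions of the continuous $F$'s with continuous functions -- are continuous, so $\mathcal{T}w\in E$ and $\mathcal{T}:\mathcal{K}\to E$ is well defined.

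The second step is to verify $\mathcal{T}(\mathcal{K})\subseteq\mathcal{K}$, and this is precisely where the form of $T$ is used. For the two simple-integral components one bounds $|\mathcal{T}x_i-X_{i,0}|\le M_i(t-t_0)\le M_iT$ and $|\mathcal{T}c_{t_0}-\sigma_a|\le M_{c,2}(t-t_0)\le M_{c,2}T$, which reproduces the constraints of (c) as soon as $T\le h_{x,i}/M_i$ and $T\le h_{c,2}/M_{c,2}$. For the double-integral components $s_j,\psi_i,L$ one uses the crude bound $\int d\theta\int d\tau\le T^2$ to get $|\mathcal{T}s_j-S_j^*|\le M_{s,j}T^2$, and likewise for $\psi_i$ and $L$, reproducing (c) provided $T\le\sqrt{h_{s,j}/M_{s,j}}$, $T\le\sqrt{h_{\psi,i}/M_{\psi,i}}$, $T\le\sqrt{h_L/M_L}$; for $c$ the right-hand side of \eqref{5.8} is a sum of \emph{two} double integrals, so $|\mathcal{T}c-\Sigma|\le 2M_{c,1}T^2$, which is $\le h_{c,1}$ when $T\le\sqrt{h_{c,1}/(2M_{c,1})}$. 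Taking $T$ to be the minimum of $T_1$ and all these quantities makes $\mathcal{K}$ invariant.

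The third step is the contraction estimate. For $w,\bar w\in\mathcal{K}$, insert the Lipschitz inequalities of (d) under the integral signs; because the sum-norm was chosen on $E$, every bracketed sum on the right of those inequalities is dominated pointwise by $\|w-\bar w\|$. The $x_i$- and $c_{t_0}$-equations, being integrals over a set of length $\le T$, then contribute $\lambda_i T\|w-\bar w\|$ and $\lambda_{c,2}T\|w-\bar w\|$; summing gives $bT\|w-\bar w\|$ with $b=\sum_i\lambda_i+\lambda_{c,2}$. The $s_j$-, $\psi_i$-, $L$- and $c$-equations are double integrals over a set of measure $\le T^2$, contributing $\lambda_{s,j}T^2$, $\lambda_{\psi,i}T^2$, $\lambda_LT^2$ and -- because of the two terms in \eqref{5.8} -- $2\lambda_{c,1}T^2$; summing gives $aT^2\|w-\bar w\|$ with $a=\sum_j\lambda_{s,j}+\sum_i\lambda_{\psi,i}+\lambda_L+2\lambda_{c,1}$. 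Hence $\|\mathcal{T}w-\mathcal{T}\bar w\|\le(aT^2+bT)\|w-\bar w\|$, and by \eqref{5.17} the factor $aT^2+bT<1$, so $\mathcal{T}$ is a contraction on the complete space $\mathcal{K}$. Banach's theorem gives a unique fixed point, which is the asserted solution in $C^0([0,T]\times[0,T])$; since any continuous solution of \eqref{5.4}--\eqref{5.9} satisfying (c) on $[0,T]^2$ is such a fixed point, uniqueness follows in the stated class. Equivalently, one may run the Picard iteration $w^{(k+1)}=\mathcal{T}w^{(k)}$ from $w^{(0)}$ above and deduce geometric convergence from the same estimates.

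I do not expect a conceptual obstacle here; the real work is the bookkeeping. One must keep careful track of which equations are simple versus double integrals in order to assemble $a$ and $b$ with the correct coefficients -- in particular the factor $2$ in $2\lambda_{c,1}$ and in $\sqrt{h_{c,1}/(2M_{c,1})}$ both trace back to the two integral terms of \eqref{5.8} -- and be consistent in using the crude bounds $\int_0^\theta d\tau\le T$ and $\int\!\!\int\le T^2$ for both the invariance and the contraction estimates. One also needs the combined norm on $E$ chosen so that the sums on the right-hand sides of the Lipschitz conditions collapse to $\|w-\bar w\|$. The only other points deserving a line of justification are the continuity of the integrands (so that $\mathcal{T}$ really lands in $C^0$) and the closedness and non-emptiness of $\mathcal{K}$.
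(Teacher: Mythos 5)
Your proposal is correct and follows essentially the same route as the paper: a contraction mapping argument on the set of continuous functions cut out by the bounds in (c), with the invariance of that set secured by the definition of $T$ and the contraction constant $\Lambda=aT^2+bT<1$ assembled exactly as you describe (including the factor $2$ from the two double integrals in \eqref{5.8}). Your added remarks on the closedness and non-emptiness of the invariant set and on reading the derivatives of $c$ as the independent component $c_{t_0}$ are points the paper leaves implicit, but the substance of the argument is identical.
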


 \begin{proof}
 Denote by $\Omega$ the space of continuous functions
 $x_{i}(t_0^{},t)$, $s_{j}(t_0^{},t)$, $\psi_{i}(t_0^{},t)$, $L(t_0^{})$, $c(t_0^{},t)$,
 $c_{t_0}^{}(t_0^{},t)$,\ $t_0^{}\in[0,\ T]$,\ $t\in[0,\ T]$, and introduce
 the norm
 \[
 ||({\bf x},{\bf s},\mbox{\boldmath $\psi$},L,c,c_{t_0}^{})||=\sum_{i=1}^{n}\max_{\Omega}|x_i^{}|
 +\sum_{j=1}^{m}\max_{\Omega}|s_j^{}|+\sum_{i=1}^{n}\max_{\Omega}|\psi_i^{}|
  +\max_{\Omega}|L|+
 \max_{\Omega}|c|+\max_{\Omega}|c_{t_0}^{}|.
 \]
 Consider the map
 $({\bf x}^*,{\bf s}^*,\mbox{\boldmath $\underline{\psi}^*$},L^{*},c^{*},c_{t_0}^{*})=A({\bf x},{\bf s},\mbox{\boldmath $\psi$},L,c,c_{t_0}^{})$,
 where $({\bf x}^*,{\bf s}^*,\mbox{\boldmath $\underline{\psi}^*$},L^{*},c^{*},c_{t_0}^{*})=$
 RHS of equations \eqref{5.4}-\eqref{5.9}.
 Let us prove that $A$ maps $\Omega$ into itself. Indeed,
 \[
 |x_{i}^*-X_{i,0}|\leq M_iT\leq h_{x,i},\ \ i=1,...,n
 \]
 \[
 |s_{j}^*-S_{j}^*|\leq M_{s,j}T^2\leq h_{s,j}, \ \ 
 |\underline{\psi}_{i}^*-\psi_i^*|\leq M_{\psi,i}T^2\leq h_{\psi,i},\ \
 \ i=1,...,n, \ j=1,...,m,
 \]
 \[
  |L^*-\Sigma|\leq M_{L}T^2\leq h_{L},
  \ |c^{*}-\Sigma|\leq 2M_{c,1}T^2 \leq h_{c,1},
  \ |c_{t_0}^{*}-\sigma_a|\leq M_{c,2}T\leq h_{c,2}.
 \]
\noindent
 Consider $(\tilde{\bf x},\tilde{\bf s},\tilde{\mbox{\boldmath $\psi$}}, \tilde L, \tilde c, \tilde c_{t_0}^{})\in\Omega$
 and let
 $(\tilde{\bf x}^*,\tilde{\bf s}^*, \tilde{\mbox{\boldmath $\psi$}}^*, \tilde L^*, \tilde c^*, \tilde c_{t_0}^{*})
 =A(\tilde{\bf x},\tilde{\bf s},\tilde{\mbox{\boldmath $\psi$}},\tilde L, \tilde c, \tilde c_{t_0}^{})$.
 It is possible to obtain
 \[
 |x_{i}^{*}-\tilde x_{i}^{*}|\leq \lambda_i T
 ||({\bf x},{\bf s},\mbox{\boldmath $\psi$},L, c, c_{t_0}^{})-
 (\tilde{\bf x},\tilde{\bf s},\tilde{\mbox{\boldmath $\psi$}}, \tilde L, \tilde c, \tilde c_{t_0}^{})||,\ i=1,...,n,
 \]
 \[
 |s_{j}^{*}-\tilde s_{j}^{*}|\leq
 \lambda_{s,j}T^2||({\bf x},{\bf s},\mbox{\boldmath $\psi$},L, c, c_{t_0}^{})-
 (\tilde{\bf x},\tilde{\bf s},\tilde{\mbox{\boldmath $\psi$}}, \tilde L, \tilde c, \tilde c_{t_0}^{})||,
 \ j=1,...,m,
 \]
  \[
 |\underline{\psi}_{i}^{*}-\tilde \psi_{i}^{*}|\leq
 \lambda_{\psi,i}T^2||({\bf x},{\bf s},\mbox{\boldmath $\psi$},L, c, c_{t_0}^{})-
 (\tilde{\bf x},\tilde{\bf s},\tilde{\mbox{\boldmath $\psi$}}, \tilde L, \tilde c, \tilde c_{t_0}^{})||,
 \ i=1,...,n,
 \]
 \[
 |L^{*}-\tilde L^{*}|\leq
 \lambda_{L}T^2||({\bf x},{\bf s},\mbox{\boldmath $\psi$},L, c, c_{t_0}^{})-
 (\tilde{\bf x},\tilde{\bf s},\tilde{\mbox{\boldmath $\psi$}}, \tilde L, \tilde c, \tilde c_{t_0}^{})||,
 \]
 \[
 |c^{*}-\tilde c^{*}|\leq
 2\lambda_{c,1}T^2||({\bf x},{\bf s},\mbox{\boldmath $\psi$},L, c, c_{t_0}^{})-
(\tilde{\bf x},\tilde{\bf s},\tilde{\mbox{\boldmath $\psi$}}, \tilde L, \tilde c, \tilde c_{t_0}^{})||,
 \]
 \[
 |c_{t_0}^{*}-\tilde c_{t_0}^{*}|\leq
 \lambda_{c,2}T||({\bf x},{\bf s},\mbox{\boldmath $\psi$},L, c, c_{t_0}^{})-
 (\tilde{\bf x},\tilde{\bf s},\tilde{\mbox{\boldmath $\psi$}}, \tilde L, \tilde c, \tilde c_{t_0}^{})||.
 \]
\noindent
 Therefore,
 \[
 ||({\bf x}^{*},{\bf s}^{*}, \mbox{\boldmath $\underline{\psi}^*$}, L^{*}, c^{*}, c_{t_0}^{*})-
 (\tilde{\bf x}^{*},\tilde{\bf s}^{*}, \tilde{\mbox{\boldmath $\psi$}}^*, \tilde L^{*}, \tilde c^{*}, \tilde c_{t_0}^{*})||
 \leq \Lambda
 ||({\bf x},{\bf s},\mbox{\boldmath $\psi$},L, c, c_{t_0}^{})-
 (\tilde{\bf x},\tilde{\bf s},\tilde{\mbox{\boldmath $\psi$}}, \tilde L, \tilde c, \tilde c_{t_0}^{})||,
 \]
 where
 \[
 \Lambda=aT^2+bT.
 \]
 According to \eqref{5.17} $\Lambda<1$, proving Theorem \ref{thm}.
 \end{proof}


 \section{Numerical applications} \label{n6}

Numerical simulations have been performed to test the behavior of the model formulated in Sec. \ref{n4}. Specifically, 
 we have considered the same biofilm system of Sec. \ref{n3}
 composed of 3 microbial species and 3 dissolved substrates. The planktonic species present
 in the bulk liquid are able to initiate the biofilm formation through the attachment process,
 penetrate the biofilm matrix once constituted and establish where the most appropriate 
 growth conditions are found. We have explored two ideal biological situations. In the first case the species 
 $\psi_3^*$ is not initially present in the bulk liquid but it arrives at time $t=t_1$ and starts to 
 attach to the external surface of the biofilm as well as penetrate the biofilm matrix. In the second case,
 the species $\psi_3^*$ is not able to attach to the biofilm surface ($v_{a,3}=0$) but it can establish in 
 sessile form through the colonization process. These biological situations will be referred to as \textit{Case 2} and \textit{Case 3}.

 \noindent
 The reaction terms $r_{M,i}$ and $r_{S,j}$ in equations \eqref{4.1} and \eqref{4.6} have been 
 adopted according to \eqref{3.10} and \eqref{3.11}. The values for $\Psi_i$ on the free boundary have been set according to \eqref{3.6} and \eqref{3.7}. 
 The values for $S_j$ on the  free boundary are reported in Table \ref{t3.1}. 
 The reaction terms concerning the colonization process 
 in equations \eqref{4.1} and \eqref{4.8} are modelled using
 Monod type kinetics and are expressed as

 \begin{equation}                                        \label{6.1}
       r_1 = \frac{k_{col,1}}{\rho} \frac{S_1}{K_1+S_1} \Psi_1, \                                     
			 r_2 = \frac{k_{col,2}}{\rho} \frac{S_2}{K_2+S_2} \Psi_2, \
       r_3 = \frac{k_{col,3}}{\rho} \frac{S_3}{K_3+S_3} \Psi_3,
\end{equation}

\begin{equation}                                        \label{6.2}
       r_{\Psi,1} = - \frac{\rho}{Y_{\Psi,1}} r_1,\
       r_{\Psi,2} = - \frac{\rho}{Y_{\Psi,2}} r_2,\
       r_{\Psi,3} = - \frac{\rho}{Y_{\Psi,3}} r_3
\end{equation}

\noindent
 where $k_{col,1}$, $k_{col,2}$, $k_{col,3}$ are the maximum colonization
 rates of motile species, and $Y_{\Psi,1}$, $Y_{\Psi,2}$, $Y_{\Psi,3}$ are the yields of the
 sessile species on planktonic ones. The values of such kinetic parameters and the diffusion coefficients for 
 $\Psi_i$ are reported in Table \ref{t6.1}. Note that for these ideal biological situations, all the species are supposed to have 
 colonization properties.

 \begin{table}[ht]
 \begin{footnotesize}
 \begin{center}
 \begin{tabular}{llccc}
 \hline
 {\textbf{Parameter}} & {\textbf{Definition}} & {\textbf{Unit}} & {\textbf{Value}}
 \\
 \hline
 $k_{col,1}$     & Maximum colonization rate for $\Psi_1$           &  $d^{-1}$                              & $2.5$           \\
 $k_{col,2}$     & Maximum colonization rate for $\Psi_2$           &  $d^{-1}$                              & $2.5$           \\
 $k_{col,3}$     & Maximum colonization rate for $\Psi_3$           &  $d^{-1}$                              & $2.5$           \\
 $Y_{\Psi,1}$    & Yield of $X_1$ on $\Psi_1$                       &  $--$                                  & $2\cdot10^{-7}$           \\
 $Y_{\Psi,2}$    & Yield of $X_2$ on $\Psi_2$                       &  $--$                                  & $2\cdot10^{-7}$           \\
 $Y_{\Psi,3}$    & Yield of $X_3$ on $\Psi_3$                       &  $--$                                  & $2\cdot10^{-7}$           \\
 $D_{\Psi,1}$    & Diffusion coefficient of $\Psi_1$ in biofilm     &  $m^2 \ d^{-1}$                        & $10^{-5}$       \\
 $D_{\Psi,2}$    & Diffusion coefficient of $\Psi_2$ in biofilm     &  $m^2 \ d^{-1}$                        & $10^{-5}$       \\
 $D_{\Psi,3}$    & Diffusion coefficient of $\Psi_3$ in biofilm     &  $m^2 \ d^{-1}$                        & $10^{-5}$       \\
\hline 
\end{tabular}\\
 \caption{Invasion parameters used for model simulations} \label{t6.1}
 \end{center}
 \end{footnotesize}
 \end{table}

\noindent
Numerical simulations have been performed for \textit{Case 2} and \textit{Case 3} by considering 
a final simulation time $T=10 \ d$. The results are summarized in Figs. \ref{f6.1}-\ref{f6.2} for \textit{Case 2}, and
in Figs. \ref{f6.3}-\ref{f6.4} for \textit{Case 3}.
\\

\textit{Case 2: Attachment and colonization of microbial species $\psi_3^*$}

 \begin{figure}
   \fbox{\includegraphics[width=0.7\textwidth, keepaspectratio]{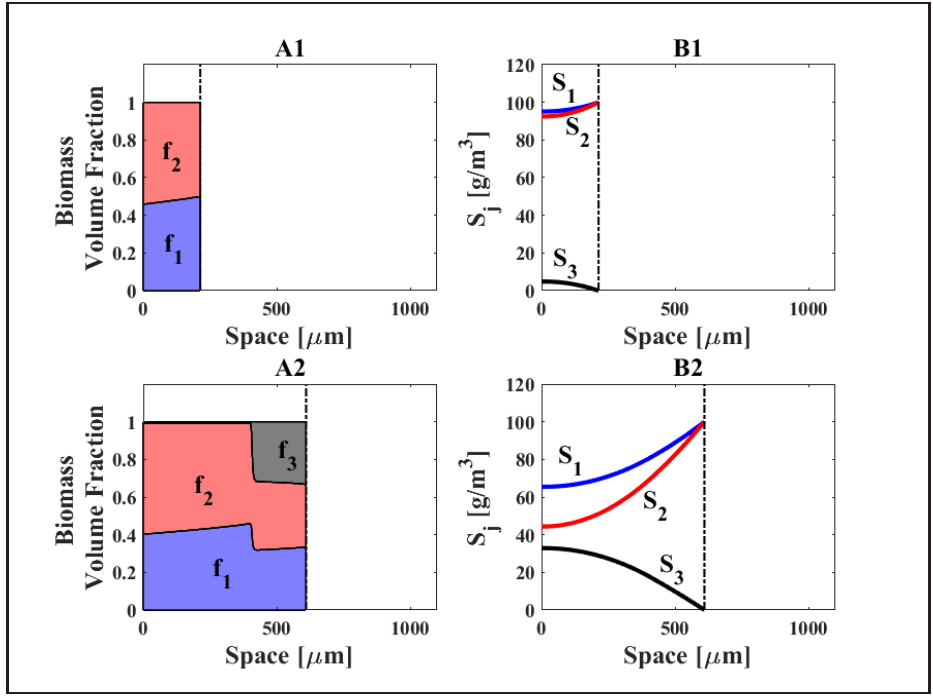}}   
 \caption{Biofilm composition (A1-A2) and substrate distribution (B1-B2) for \textit{Case 2}, under attachment regime, at time
 $t=0.25 \ d$ (top) and $t=0.50 \ d$ (bottom).}  \label{f6.1}
 \end{figure}

 \begin{figure}
    \fbox{\includegraphics[width=0.7\textwidth, keepaspectratio]{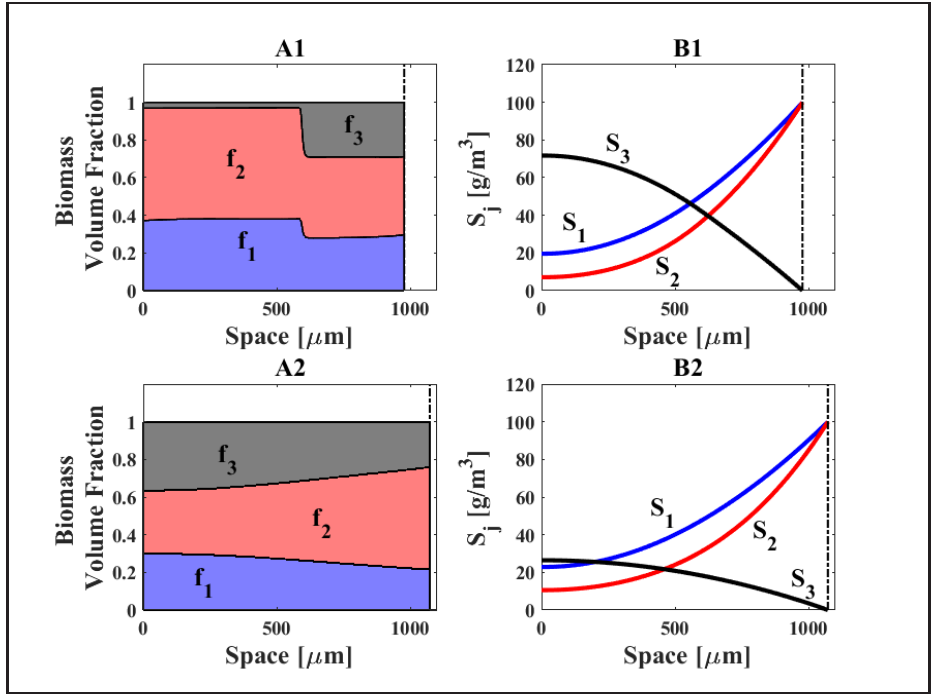}}   
 \caption{Biofilm composition (A1-A2) and substrate distribution (B1-B2) for \textit{Case 2}, under detachment regime, at time
 $t=1 \ d$ (top) and $t=10 \ d$ (bottom).}  \label{f6.2}
 \end{figure}

The results reported in Figs. \ref{f6.1} and \ref{f6.2} highlight model capability to 
reproduce both the attachment and colonization phenomena that strongly affect biofilm lifecycle. 
In particular, it is possible to notice that during the initial phase of biofilm formation, 
the biofilm undergoes the same development illustrated in Sec. \ref{n3} and reported 
in Fig. \ref{f6.2}. However, at time $t=0.50 \ d$ it is visible that the volume 
fraction of the third species $f_3$ is slightly positive even in the region $z< c(t_1,t)$ due to
the colonization phenomenon (Fig. \ref{f6.1}(A2)). Going on with the simulation time, $f_3$
increases all over the biofilm leading to a higher biofilm thickness at time $t=1 \ d$ (Fig. \ref{f6.2}(A1)). At the final
simulation time $t=10 \ d$ and under detachment regime, the biofilm is constituted by all the species inhabiting the bulk liquid (Fig. \ref{f6.2}(A2))
conversely to the numerical results reported in Sec. \ref{n3} where the complete washout of species $f_3$ has been observed.
The different biofilm stratification affects substrate trends as it is possible to notice that 
at the final simulation time, $S_3$ concentration is much lower when compared to the numerical example of pure attachment regime. 
\\ 

\textit{Case 3: Pure colonization of microbial species $\psi_3^*$}

 \begin{figure}
     \fbox{\includegraphics[width=0.7\textwidth, keepaspectratio]{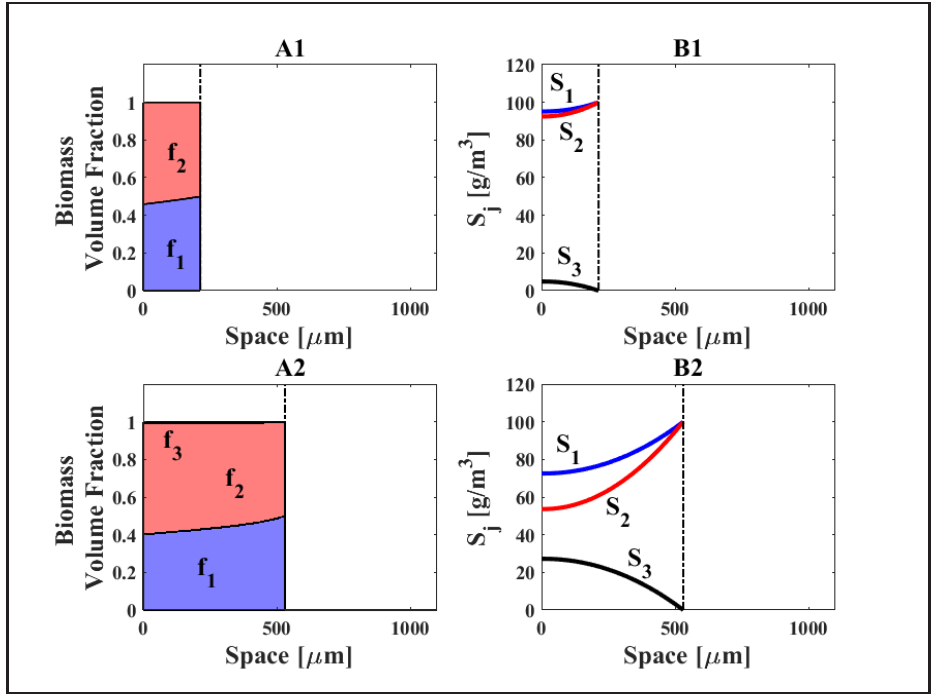}}   
 \caption{Biofilm composition (A1-A2) and substrate distribution (B1-B2) for \textit{Case 3}, under attachment regime, at time
 $t=0.25 \ d$ (top) and $t=0.50 \ d$ (bottom).}  \label{f6.3}
 \end{figure}

 \begin{figure}
     \fbox{\includegraphics[width=0.7\textwidth, keepaspectratio]{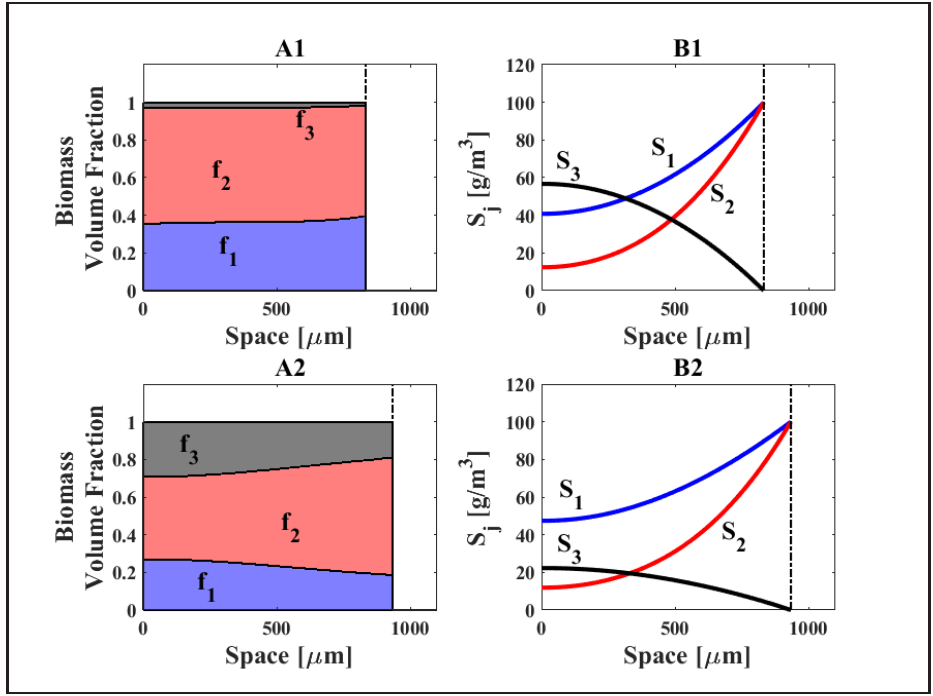}}   
 \caption{Biofilm composition (A1-A2) and substrate distribution (B1-B2) for \textit{Case 3}, under detachment regime, at time
 $t=1 \ d$ (top) and $t=10 \ d$ (bottom).}  \label{f6.4}
 \end{figure}

Figs. \ref{f6.3} and \ref{f6.4} illustrate the biofilm development and substrate trends 
when the species $\psi_3^*$ is not able to attach to the biofilm surface, but it can penetrate the biofilm
matrix and establish in sessile form. According to Figs. \ref{f6.3}-\ref{f6.4}, numerical results reveal that
for all simulation times the biofilm thickness is smaller compared to the pure attachment case.
This contributes to have different substrate trends within the biofilm (Figs. \ref{f6.3}-\ref{f6.4}(B1-B2)). 
In terms of biomass distribution, 
it is possible to notice that the third species grow in sessile form in the inner layers of the biofilm 
where there is the highest $S_3$ concentration. The biomass stratification and subtrate trends at the final simulation time
resemble the one achieved for \textit{Case 2}. Such results highlight an important feature of the model:
the attachment and colonization phenomena are both dependent on the planktonic cells present in the 
bulk liquid. They can occur simultaneously reproducing the case of planktonic cells able to attach to the surface 
and penetrate the biofilm matrix. Conversely, the planktonic cells can be characterized by a certain motility 
which drives them to the biofilm region where there are the most appropriate conditions for their growth.


 \section{Conclusion} \label{n7}

The proposed model comprehensively describes the transition from planktonic to sessile phenotype which governs 
the biofilm dynamics. This allows to properly reproduce the evolution of biofilms starting from the initial 
formation and including the establishment and growth of new species. The criticism of Wanner and Gujer type models, 
discussed in \cite{klapper2011exclusion}, is here emphasized through a numerical example. Such models are not able 
to properly describe the growth of microbial species which do not participate in the initial biofilm formation
attaching later to a pre-existing aggregate. The presented model is able to overcome this issue as 
it considers both the initial attachment phase and the growth of new sessile species within the biofilm mediated by the invasion process. 
The modelling of the initial phase of biofilm formation allows to describe the biofilm growth without arbitrarily fixing 
the initial composition of the biofilm. The existence and uniqueness of solutions is proved in the case of attachment regime. 
Numerical examples are provided to show model capability to reproduce the different stages of biofilm growth 
as affected by the planktonic phenotype. Future work may be related to the 
role of biofilm porosity on planktonic species diffusion and the qualitative analysis under detachment regime.


 \section*{Acknowledgements}

This study has been performed under the auspices of the G.N.F.M. of Indam.
 The authors acknowledge the Progetto Giovani G.N.F.M. 2019 \textit{Modellazione ed analisi di sistemi microbici complessi: applicazione ai biofilm},
 the project \textit{VOLAC - Valorization of OLive oil wastes for sustainable
 production of biocide-free Antibiofilm Compounds} of Cariplo Foundation (grant number 2017-0977)
 and the program "Programma Operativo Nazionale Ricerca e Innovazione (PON RI 2014/2020) Action I.1 - Innovative PhDs with industrial characterization"
 for financial support.

\end{document}